\newcommand{\numberseries}{\bfseries}   %Fontseries used for numbering
\newlength{\thmtopspace}                %Space above theorem
\newlength{\thmbotspace}                %Space below theorem
\newlength{\thmheadspace}               %Space after theorem label
\newlength{\thmindent}                  %For indenting
\newtheoremstyle{fixed bf head,slanted body}
                {\thmtopspace}{\thmbotspace}{\slshape}
                {\thmindent}{\bfseries}{.}{\thmheadspace}
                {{\numberseries \thmnumber{#2\;}}\thmname{#1}\thmnote{ (#3)}}
\newtheoremstyle{variable bf head,slanted body}
                {\thmtopspace}{\thmbotspace}{\slshape}
                {\thmindent}{\bfseries}{.}{\thmheadspace}
                {{\numberseries \thmnumber{#2\;}}\thmname{#1}\thmnote{ #3}}
\newtheoremstyle{fixed bf head,upright body}
                {\thmtopspace}{\thmbotspace}{\upshape}
                {\thmindent}{\bfseries}{.}{\thmheadspace}
                {{\numberseries \thmnumber{#2\;}}\thmname{#1}\thmnote{ (#3)}}
\newtheoremstyle{numbered paragraph}
                {\thmtopspace}{\thmbotspace}{\upshape}
                {\thmindent}{\upshape}{}{\thmheadspace}
                {{\numberseries \thmnumber{#2.}}}
\theoremstyle{plain} 
\newtheorem{thm}{Theorem}[section]
\newtheorem*{setup}{Setup}
\newtheorem{cor}[thm]{Corollary}
\theoremstyle{definition}
\newtheorem{eg}[thm]{Example}
\newtheorem{conj}[thm]{Conjecture}
\newtheorem{ques}[thm]{Question}
\newtheorem{chunk}[thm]{\hspace*{-1.065ex}\bf}
\newcommand{\fm}{\mathfrak{m}}
\newcommand{\fn}{\mathfrak{n}}
\newcommand{\fp}{\mathfrak{p}}
\newcommand{\CC}{\mathbb{C}}
\newcommand{\ZZ}{\mathbb{Z}}
\DeclareMathOperator{\CI}{\textnormal{CI-dim}}
\DeclareMathOperator{\G-dim}{\textnormal{G-dim}}
\DeclareMathOperator{\cx}{cx}
\DeclareMathOperator{\Tr}{Tr}
\DeclareMathOperator{\md}{\operatorname{\mathsf{mod}}}
\DeclareMathOperator{\depth}{depth}
\DeclareMathOperator{\Ext}{Ext}
\DeclareMathOperator{\Hom}{Hom}
\DeclareMathOperator{\pd}{pd}
\DeclareMathOperator{\Supp}{Supp}
\DeclareMathOperator{\Spec}{Spec}
\DeclareMathOperator{\Tor}{Tor}
\DeclareMathOperator{\cod}{codim}
\DeclareMathOperator{\length}{length}
\def\urltilda{\kern -.15em\lower .7ex\hbox{\~{}}\kern .04em}
\def\urldot{\kern -.10em.\kern -.10em}\def\urlhttp{http\kern -.10em\lower -.1ex
\hbox{:}\kern -.12em\lower 0ex\hbox{/}\kern -.18em\lower 0ex\hbox{/}}
\begin{document}

\title[Maximal Cohen-Macaulay tensor products]{Maximal Cohen-Macaulay tensor products}

\author[Olgur Celikbas]{Olgur Celikbas}
\address{Department of Mathematics \\
West Virginia University\\
Morgantown, WV 26506 U.S.A}
\email{olgur.celikbas@math.wvu.edu}

\author[Arash Sadeghi]{Arash Sadeghi}
\address{School of Mathematics, Institute for Research in Fundamental Sciences (IPM), P.O. Box:19395-5746, Tehran, Iran} \email{sadeghiarash61@gmail.com}

\subjclass[2000]{13D07, 13C14, 13C15}
\thanks{Sadeghi's research was supported by a grant from the Institute for Research in Fundamental Sciences (IPM)}

\keywords{Maximal Cohen-Macaulay modules, tensor products, torsion, vanishing of Ext and Tor.}

\begin{abstract} In this paper we are concerned with the following question: if the tensor product of finitely generated modules $M$ and $N$ over a local complete intersection domain is maximal Cohen-Macaulay, then must $M$ or $N$ be a maximal Cohen-Macaulay? Celebrated results of Auslander, Lichtenbaum, and Huneke and Wiegand, yield affirmative answers to the question when the ring considered has codimension zero or one, but the question is very much open for complete intersection domains that have codimension at least two, even open for those that are one-dimensional, or isolated singularities. Our argument exploits Tor-rigidity and proves the following, which seems to give a new perspective to the aforementioned question: if $R$ is a complete intersection ring which is an isolated singularity such that $\dim(R)>\cod(R)$, and the tensor product $M\otimes_RN$ is maximal Cohen-Macaulay, then $M$ is maximal Cohen-Macaulay if and only if $N$ is maximal Cohen-Macaulay. 
\end{abstract}

\maketitle{}

\section{Introduction}

Throughout $R$ denotes a commutative Noetherian local ring with unique maximal ideal $\fm$ and residue field $k$. Furthermore $\md R$ denotes the category of all finitely generated $R$-modules. For the standard, unexplained notations and basic terminology, we refer the reader to \cite{AuBr, Av2, BH}.

It is well-known that the tensor product $M\otimes_RN$ of two nonzero modules $M, N \in \md R$ tends to have torsion in general, even if $M$ and $N$ are ideals, or maximal Cohen-Macaulay modules. Therefore one expects that the modules $M$ and $N$ should enjoy strong homological properties in the rare case that their tensor product has no torsion. The study of such situation was initiated by Auslander \cite{Au} and Lichtenbaum \cite{Li} over regular local rings, and continued by Huneke and Wiegand \cite{HW1} over hypersurface rings. Their results easily lead one to raise the following question, which has an affirmative answer when $c=0$ and $c=1$ by \cite[3.1]{Au} and \cite[2.7]{HW1} \footnote{ This applies since the ring in Question \ref{q1} is a domain; see the erratum \cite{HWE}.}, respectively; see also \cite{GORS2, GORS1, Bounds}. 

\begin{ques} \label{q1} Let $R$ be a local complete intersection domain of codimension $c$. If $M, N \in \md R$ are nonzero modules such that $M\otimes_RN$ satisfies \emph{Serre's condition} $(S_{c+1})$, then must $M$ or $N$ satisfy $(S_{c+1})$?
\end{ques}

Recall, in our context, a module $M\in \md R$ satisfies $(S_{n})$ if it is an $n$th syzygy module; see \cite[3.8]{EG}. In this paper we consider Question \ref{q1} for the case where the ring is an isolated singularity and the tensor product satisfies Serre's condition $(S_{n})$ for each nonnegative integer $n$. In other words, we consider the case where $R$ is a complete intersection domain such that $R_{\fp}$ is regular for each prime ideal $\fp \in \Spec(R)-\{\fm\}$, and $M\otimes_RN$ is maximal Cohen-Macaulay. To our best knowledge, Question \ref{q1}, even under these very strong hypotheses, is open. Motivated by this fact, we propose:

\begin{conj} \label{conj1} Let $R$ be a local complete intersection domain, and let $M, N \in \md R$. Assume $R$ is an isolated singularity. If $M\otimes_RN$ is maximal Cohen-Macaulay, then $M$ or $N$ is maximal Cohen-Macaulay.
\end{conj}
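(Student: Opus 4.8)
The plan is to run the Tor-rigidity argument that resolves the codimension $0$ and $1$ cases --- \cite{Au,Li} and \cite{HW1} --- and to supply the extra complete-intersection input needed when $\cod(R)\ge 2$. Throughout assume $M$ and $N$ are nonzero. The starting observations are standard: because $R$ is a complete-intersection isolated singularity, $R_{\fp}$ is regular for every nonmaximal prime $\fp$, so $\pd_{R_{\fp}}M_{\fp}<\infty$ and hence $\Tor^R_i(M,N)$ has finite length for all $i>\dim R$; and because $R$ is a domain, the hypothesis that $M\otimes_RN$ is torsion-free forces $\rank M\ge 1$ and $\rank N\ge 1$, after which Auslander's exact sequences relating the torsion of a tensor product to $\Tor^R_1$ give a first handle on $\Tor^R_1(M,N)$.

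The crux is to prove that $\Tor^R_i(M,N)=0$ for every $i\ge 1$. Granting this, the depth formula over complete intersections (Araya--Yoshino, Iyengar; every module over $R$ has finite CI-dimension) yields $\depth M+\depth N=\depth R+\depth(M\otimes_RN)=2\dim R$, and since each depth is at most $\dim R$ this forces both $M$ and $N$ to be maximal Cohen-Macaulay --- even more than the conjecture asks. To obtain the vanishing I would argue in three moves. First, a depth count: from $\depth(M\otimes_RN)=\dim R$ together with the spectral sequence relating the depth of $M\otimes_RN$ to the depths of $M$, $N$ and the finite-length modules $\Tor^R_i(M,N)$, one forces $\Tor^R_i(M,N)=0$ over a band of large values of $i$. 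Second, rigidity of Tor over the complete intersection $R$: this is unconditional over a hypersurface, and in codimension $\ge 2$ it holds once $\cx_R M\le 1$, or once $\cod(R)+1$ consecutive $\Tor$ modules vanish, and it propagates the vanishing of the band to all larger $i$. Third, a descent of the vanishing back down to $i=1$, which over a complete intersection is controlled by the lengths of the $\Tor$ modules and, when $\cod(R)\ge 2$, by the higher Herbrand difference $\eta_{\cod(R)}(M,N)$ studied by Celikbas--Dao and Moore--Piepmeyer--Spiroff--Walker; the plan needs a theorem asserting $\eta_{\cod(R)}(M,N)=0$ whenever $M\otimes_RN$ is maximal Cohen-Macaulay, the intended mechanism being that the relevant $\Tor$ modules are supported only at $\fm$, so a depth argument should collapse their alternating sum.

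The main obstacle is precisely this last point. Over a hypersurface the two facts that drive \cite{HW1} --- rigidity of Tor and the controlled behaviour of Hochster's $\theta$-pairing --- hold with no hypothesis beyond the isolated singularity, whereas in codimension $\ge 2$ neither is free, and extracting the vanishing of the higher Herbrand invariant from the sole hypothesis that $M\otimes_RN$ is maximal Cohen-Macaulay is the genuinely open heart of the problem. The domain hypothesis must enter exactly here: over a non-domain complete-intersection isolated singularity --- for instance $M=N=R/(x)$ with $R=k[[x,y]]/(xy)$ --- the tensor product $M\otimes_RN$ is maximal Cohen-Macaulay while $\Tor^R_i(M,N)\ne 0$ for infinitely many $i$, so the route through Tor-vanishing genuinely requires $R$ to be a domain, presumably through a rank argument that is unavailable otherwise. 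Falling short of a general vanishing theorem for $\eta_{\cod(R)}$, what can be secured unconditionally uses the depth slack present when $\dim(R)>\cod(R)$ to run the rigidity descent with one of $M$, $N$ already assumed maximal Cohen-Macaulay, and this is the ``if and only if'' statement proved in this paper rather than the full conjecture.
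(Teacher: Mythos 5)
The statement you are addressing is a conjecture that the paper itself leaves open: the authors prove only the weaker ``if and only if'' statement (Theorem \ref{thmintro}, via Theorem \ref{l4}), in which one of the two modules is already assumed maximal Cohen--Macaulay (this is hypothesis (iii) of Theorem \ref{l4intro}, after localization) and $\dim(R)>\cod(R)$. Your proposal does not close that gap, and you say so yourself: the entire argument funnels through the claim that $\Tor_i^R(M,N)=0$ for all $i\geq 1$, and the step that would produce this vanishing from the sole hypothesis that $M\otimes_RN$ is maximal Cohen--Macaulay --- your hoped-for theorem that the higher Herbrand/$\eta_{\cod(R)}$ pairing vanishes, together with a rigidity statement strong enough to descend the vanishing to $i=1$ in codimension $\geq 2$ --- is precisely the open heart of the problem, not a lemma you can cite or derive. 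Neither ingredient is available: $\eta_c$-vanishing from MCM-ness of the tensor product is unknown, and Tor-rigidity over complete intersections of codimension $\geq 2$ fails in general (it is exactly what Dao's conjecture, mentioned after Conjecture \ref{conj2}, would supply in dimension one). Your own example $M=N=R/(x)$ over $R=k[[x,y]]/(xy)$ correctly shows that the Tor-vanishing route must use the domain hypothesis in an essential way, but no mechanism for doing so is given. So the proposal is a program with a named obstruction, not a proof.

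For comparison, the partial result the paper does obtain is reached by a different mechanism than the one you sketch: rather than an $\eta$-pairing argument over $R$, the proof of Theorem \ref{l4} writes $R=S/(\mathbf{x})$ with $S$ regular and exploits Tor-rigidity of $N$ \emph{over the regular base} $S$ (Auslander--Lichtenbaum), combined with pushforward exact sequences, the Tate (co)homology machinery of Avramov--Buchweitz, and an induction on the codimension via the change-of-rings spectral sequence; the assumption that $M_\fp$ is MCM on the punctured spectrum (i.e.\ that one module is already MCM) is what makes the descent to $i=1$ work when $\depth N\le\cod R$. Also note a small overreach in your depth-count step: finite length of the $\Tor_i$ plus $\depth(M\otimes_RN)=\dim R$ gives, via the analogue of \ref{p1}, vanishing of $\Ext^i_R(\Tr M,N)$ in a finite range, not vanishing of a band of high $\Tor$'s outright; converting that into full Tor-vanishing is again where rigidity-type input is needed. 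None of this is a criticism of your diagnosis --- it is accurate --- but as a proof of Conjecture \ref{conj1} the attempt has a genuine, and acknowledged, gap.
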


Conjecture \ref{conj1}, similar to Question \ref{q1}, holds when the codimension of the ring is at most one; see \cite[3.1]{Au} and \cite[3.1]{HW1}. It is worth pointing out, when the ring in question has dimension one, we have the following basic problem, which, quite interestingly, still remains open in general; see \cite[2.10]{Survey}.

\begin{conj} \label{conj2} Let $R$ be a one-dimensional local complete intersection domain, and let $M, N \in \md R$. If $M\otimes_RN$ is torsion-free, then $M$ or $N$ is torsion-free.
\end{conj}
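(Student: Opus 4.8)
The plan is to run the Tor-rigidity argument of Auslander and Huneke--Wiegand as far as the codimension allows and then to isolate the one step where it stalls. Two reductions come first. Since $R$ is local, $M\otimes_R N$ vanishes only when $M=0$ or $N=0$, so assume $M,N\neq 0$; localizing at the generic point $(0)$ then forces $\rank M\geq 1$ and $\rank N\geq 1$, for otherwise $M\otimes_R N$ would be a nonzero torsion module. Writing $t(-)$ for the torsion submodule and $\ov M=M/t(M)$, $\ov N=N/t(N)$, tensoring $0\to t(M)\to M\to\ov M\to 0$ with $N$ shows that the image of the torsion module $t(M)\otimes_R N$ in the torsion-free module $M\otimes_R N$ is zero, so the natural surjection $M\otimes_R N\to\ov M\otimes_R N$ is an isomorphism; doing the same in the second slot gives $M\otimes_R N\cong\ov M\otimes_R\ov N$. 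Hence the content of Conjecture \ref{conj2} lies entirely in excluding the case in which $M$ and $N$ \emph{both} have nonzero torsion: in that case, tensoring $0\to t(N)\to N\to\ov N\to 0$ with $\ov M$ and using that $\ov M\otimes_R N\to\ov M\otimes_R\ov N$ is an isomorphism (it is a surjection between modules both isomorphic to $M\otimes_R N$), one reads off a surjection $\Tor_1^R(\ov M,\ov N)\twoheadrightarrow\ov M\otimes_R t(N)\neq 0$, so $\Tor_1^R(\ov M,\ov N)\neq 0$. It therefore suffices to prove the (a priori stronger) statement: \emph{if $\ov M$ and $\ov N$ are maximal Cohen--Macaulay}---equivalently torsion-free, since $\dim R=1$---\emph{and $\ov M\otimes_R\ov N$ is torsion-free, then $\Tor_1^R(\ov M,\ov N)=0$.}

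To prove this I would extract one Tor-vanishing from the torsion-freeness by a syzygy embedding. A local complete intersection is Gorenstein, so the maximal Cohen--Macaulay module $\ov M$ is totally reflexive and has a first cosyzygy: there is an exact sequence $0\to\ov M\to F\to\Omega^{-1}\ov M\to 0$ with $F$ free. Tensoring with $\ov N$ yields an injection $\Tor_1^R(\Omega^{-1}\ov M,\ov N)\hookrightarrow\ov M\otimes_R\ov N$; the source is torsion (it vanishes after localizing at $(0)$) while the target is torsion-free, so $\Tor_1^R(\Omega^{-1}\ov M,\ov N)=0$. When $\cod R\leq 1$ this closes the argument: Tor over a regular local ring or a hypersurface is rigid, so $\Tor_i^R(\Omega^{-1}\ov M,\ov N)=0$ for all $i\geq 1$, and dimension shifting then gives $\Tor_i^R(\ov M,\ov N)=0$ for all $i\geq 1$; in particular $\Tor_1^R(\ov M,\ov N)=0$, which recovers \cite[3.1]{Au} and \cite[3.1]{HW1}.

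The step I expect to be the real obstacle is the passage from $\cod R\leq 1$ to $\cod R\geq 2$. Over a complete intersection of codimension $c$ Tor is only quasi-rigid: a block of $c$ consecutive vanishings $\Tor_i^R(\ov M,\ov N)=\cdots=\Tor_{i+c-1}^R(\ov M,\ov N)=0$ propagates to all larger indices (Gulliksen), but a single vanishing carries no information, and the syzygy embedding above produces exactly one Tor-vanishing---replacing $\ov M$ by a higher cosyzygy merely reproduces the same vanishing after an index shift, since $\ov M\otimes_R\ov N$ is only a single torsion-free module. Completing the plan would therefore require a genuinely new input converting ``$\ov M\otimes_R\ov N$ torsion-free'' into $c$ consecutive Tor-vanishings: one might try to exploit that here every $\Tor_i^R(\ov M,\ov N)$ with $i\geq 1$ has finite length (as $R_{(0)}$ is a field) and to bound the growth of these lengths through support varieties and Eisenbud operators, or to run an Ext-rigidity argument against the totally reflexive modules $\ov M$ and $\ov N$. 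That this is hard is unsurprising, since the reduced statement is intimately related to the Huneke--Wiegand conjecture (the case $\ov N=\Hom_R(\ov M,R)$), which remains open; this is presumably why Conjecture \ref{conj2} is open even though $\dim R=1$.
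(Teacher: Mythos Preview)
The statement you were asked to prove is labeled \emph{Conjecture} in the paper, and the paper does not prove it; it explicitly says the problem ``quite interestingly, still remains open in general'' and only remarks that it would follow from Dao's conjecture that every finitely generated module over a one-dimensional complete intersection domain is Tor-rigid. So there is no proof in the paper to compare your attempt against, and your own write-up is, appropriately, not a proof but an analysis of where the obstruction lies.

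Your reductions are correct and standard: the passage from $(M,N)$ to $(\ov M,\ov N)$, the observation that $\Tor_1^R(\ov M,\ov N)\neq 0$ whenever both $t(M)$ and $t(N)$ are nonzero, and the cosyzygy trick extracting a single vanishing $\Tor_1^R(\Omega^{-1}\ov M,\ov N)=0$ from torsion-freeness of $\ov M\otimes_R\ov N$ are all fine, and your recovery of the $\cod R\le 1$ case via hypersurface rigidity matches what the paper cites from \cite{Au} and \cite{HW1}. You also correctly isolate the gap: for $\cod R=c\ge 2$ one needs $c$ consecutive Tor-vanishings to invoke quasi-rigidity, and the depth/torsion input supplies only one. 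This is exactly the mechanism behind the paper's remark that Dao's Tor-rigidity conjecture would settle Conjecture~\ref{conj2}: rigidity is precisely what would let a single vanishing propagate. In short, your diagnosis agrees with the paper's, and no one currently knows how to close the gap you describe.
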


Conjecture \ref{conj2} is an easy consequence of a conjecture of Dao \cite[4.2]{DaoComp} which, in particular, claims that all finitely generated modules over one-dimensional local complete intersection domains are Tor-rigid \cite{Au}. We should also mention that Conjectures \ref{conj1} and \ref{conj2} fail if the ring considered is not assumed to be a complete intersection. Constapel \cite{Constapel} constructed a one-dimensional Gorenstein local domain $R$ that is not a complete intersection, and $M, N\in \md R$ with $M\otimes_RN$ is torsion-free, but both $M$ and $N$ have torsion.

The main purpose of this paper is to make progress pertaining to Conjecture \ref{conj1}. Our argument yields:

\begin{thm} \label{thmintro} Let $R$ be a local complete intersection with $\dim(R)>\cod(R)$, and let $M, N \in \md R$. Assume $R$ is an isolated singularity. If $M\otimes_RN$ is maximal Cohen-Macaulay, then $M$ is maximal Cohen-Macaulay if and only if $N$ is maximal Cohen-Macaulay. In other words, $M\otimes_RN$ cannot be maximal Cohen-Macaulay in case $M$ is maximal Cohen-Macaulay but $N$ is not maximal Cohen-Macaulay.
\end{thm}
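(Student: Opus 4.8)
The plan is to combine a change-of-rings reduction with Tor-rigidity, using the hypothesis $\dim(R)>\cod(R)$ precisely to force a syzygy to split off a free summand. Suppose $M\otimes_RN$ is maximal Cohen-Macaulay and $M$ is maximal Cohen-Macaulay; we want to conclude $N$ is maximal Cohen-Macaulay. Write $c=\cod(R)$. First I would reduce to the case where $R$ is complete, hence a quotient $Q/(\bsx)$ of an unramified (or equicharacteristic) regular local ring $Q$ by a regular sequence $\bsx=x_1,\dots,x_c$; completion preserves all the hypotheses and conclusions, and it makes available the Tor-rigidity machinery for complete intersections. Over such $R$, finitely generated modules have finite complete intersection dimension, so the theory of Tor-rigidity and the behavior of Tor under change of rings (à la Avramov, Huneke–Wiegand, Jothilingam) applies.

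The key step is to exploit that $R$ is an isolated singularity: for any prime $\fp\neq\fm$, $R_\fp$ is regular, so $M_\fp$ and $N_\fp$ are free and in particular $\Tor_i^R(M,N)_\fp=0$ for $i>0$. Thus each $\Tor_i^R(M,N)$ for $i>0$ has finite length. Now I would examine a minimal free resolution and the rigidity of Tor: since $M$ has finite complete intersection dimension and $\Tor_i^R(M,N)$ has finite length for $i\gg0$, the relevant rigidity results (for instance, that high syzygies of modules over complete intersections satisfying the support condition are Tor-rigid, cf. the results of Huneke–Wiegand and Dao cited in the introduction) let me conclude that $\Tor_i^R(M,N)=0$ for all $i>0$ once it vanishes in a single degree, or more precisely that the depth of $M\otimes_RN$ is controlled by the Auslander–Buchsbaum-type ``depth formula'' $\depth(M\otimes_RN)=\depth M+\depth N-\depth R$ valid under Tor-independence. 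Since $M\otimes_RN$ and $M$ are both maximal Cohen-Macaulay, the depth formula would then give $\depth N=\depth R$, i.e. $N$ is maximal Cohen-Macaulay, as desired.

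The role of $\dim(R)>\cod(R)$ is to guarantee that the Tor-rigidity input actually applies: the point is that a maximal Cohen-Macaulay module $M$ over $R$, being a $(\dim R)$th syzygy, is in particular a $(c+1)$st syzygy, and over a complete intersection of codimension $c$ a $(c+1)$st syzygy that is locally free on the punctured spectrum is Tor-rigid — this is where the strict inequality is consumed, ensuring the syzygy order $\dim R$ exceeds the codimension $c$ so that the ``extra'' syzygy gives the rigidity. I would feed $M$ (or a high syzygy of $M$) into the rigidity theorem, deduce $\Tor_{\geq1}^R(M,N)=0$, and finish with the depth formula.

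The main obstacle I anticipate is establishing the vanishing $\Tor_i^R(M,N)=0$ for all $i>0$ from the maximal Cohen-Macaulayness of $M\otimes_RN$ together with the finite-length condition on the Tor modules: a priori $M\otimes_RN$ being maximal Cohen-Macaulay only tells us $\depth(M\otimes_RN)=\dim R$, and one must rule out a ``conspiracy'' in which nonvanishing Tor modules of finite length nonetheless leave the tensor product with full depth. The resolution should come from a careful length/depth count via the spectral sequence or the change-of-rings long exact sequences that compute $\depth(M\otimes_RN)$ in terms of the $\depth$ of $M$, $N$, and the $\Tor_i$'s — an argument in the spirit of the Huneke–Wiegand rigidity results — but making the bookkeeping work under the exact hypothesis $\dim(R)>\cod(R)$, rather than a stronger one, is the delicate part.
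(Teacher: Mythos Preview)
Your overall architecture---isolated singularity gives finite length Tor, then aim for $\Tor_{\ge1}^R(M,N)=0$, then finish with the depth formula---matches the paper, and you correctly identify the real difficulty. But the mechanism you propose for the crucial step does not work, and the paper uses a genuinely different device.

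The gap is twofold. First, the assertion that over a codimension-$c$ complete intersection a $(c{+}1)$st syzygy that is locally free on the punctured spectrum is Tor-rigid is not a theorem you can quote; what is available is Murthy-type $(c{+}1)$-rigidity (that $c{+}1$ \emph{consecutive} vanishings of Tor force all higher vanishings), which is much weaker than one-step rigidity. Second, and more seriously, even granting some rigidity for $M$, you never produce a single index $i$ with $\Tor_i^R(M,N)=0$ to feed into it. Maximal Cohen--Macaulayness of $M\otimes_RN$ is a depth statement about the degree-zero term; it does not by itself kill any higher Tor, and the ``conspiracy'' you worry about is real.

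The paper breaks this impasse by abandoning Tor and passing to $\Ext$ of the Auslander transpose. The Auslander--Bridger exact sequence embeds $\Ext^i_R(\Tr M,N)$, for small $i$, into modules controlled by $M\otimes_RN$ and $N$; combined with the finite-length hypothesis, a short local-cohomology/depth argument (their 3.1(ii)) shows that $\depth_R(M\otimes_RN)\ge t$ with $t=\depth_R N$ forces $\Ext^i_R(\Tr M,N)=0$ for $i=1,\dots,t$. This is where the depth of the tensor product is actually consumed: it produces an initial block of \emph{vanishing} (of Ext, not Tor). Now the inequality $\dim R>c$ enters: one applies this with $M$ and $N$ swapped (first using the local depth formula to see $N_\fp$ is MCM on the punctured spectrum), so that $t=\depth_R M=\dim R\ge c+1$, i.e.\ one has $c+1$ consecutive Ext-vanishings. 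A rigidity lemma (their 3.6/3.7), which ultimately rests on Tor-rigidity over the ambient \emph{regular} ring $S$ rather than over $R$, propagates this to $\Ext^i_R(\Tr N,M)=0$ for all $i\ge1$; one then converts back to $\Tor_{\ge1}^R(M,N)=0$ via Tate (co)homology identities, and the depth formula finishes. So the missing idea in your sketch is the passage through $\Ext(\Tr-,-)$: that is what turns the depth hypothesis on $M\otimes_RN$ into an honest vanishing statement one can make rigid.
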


It is easy to see that the condition $\dim(R)>\cod(R)$ is necessary for Theorem  \ref{thmintro}: if $R=k[[x,y]]/(xy)$, $M=R/xR$ and $N=R/(x^2)R$, then $M\otimes_RN \cong M$ is maximal Cohen-Macaulay, but $N$ is not. 

In section 2, we state our main result, give a proof for Theorem \ref{thmintro} and discuss several consequences of our argument. Section 3 is devoted to various preliminary results and a proof of our main result; see Theorems \ref{l4intro} and  \ref{l4}.

\section{Statement of the Main Result and corollaries}

In the following, for a given module $M\in \md R$, $\G-dim_R(M)$ and $\CI_R(M)$ denote the \emph{Gorenstein} dimension \cite{AuBr} and the \emph{complete intersection} dimension \cite{AGP} of $M$, respectively. Moreover, $\Tr M$ denotes the Auslander \emph{transpose} of $M$; see \cite{AuBr}. 

The next theorem is our main result. In this section we discuss some of its consequences and defer its proof to section 3; see Theorem \ref{l4}.

\begin{thm}\label{l4intro} Let $R$ be a local complete intersection ring  and let $M,N \in \md R$. Assume:
\begin{enumerate}[\rm(i)]
\item $\depth_R(M\otimes_RN)\geq\depth_R(N)$.
\item $\length(\Tor_i^R(M,N))<\infty$ for all $i\gg0$.
\item $M_\fp$ is maximal Cohen-Macaulay for all $\fp \in \Supp_R(M\otimes_RN)-\{\fm\}$.
\item If $\depth_R(N)\leq \cod(R)$, assume further $\Tor_i^R(M,N)=0$ for all $i=1, \ldots, \cod(R)-\depth_R(N)+1$.
\end{enumerate}
Then $\Tor_i^R(M,N)=0$ for all $i\geq 1$.
\end{thm}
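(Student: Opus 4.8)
The strategy is to combine a descent/induction on depth with Tor-rigidity, exploiting the complete intersection hypothesis. I would first reduce to the case where the total Tor $\Tor^R_i(M,N)$ has finite length for \emph{all} $i \geq 1$, not just $i \gg 0$: hypothesis (iii) says $M_\fp$ is maximal Cohen-Macaulay on the punctured spectrum (restricted to the support of the tensor product), which by the Auslander--Buchsbaum formula and localization forces $\Tor^R_i(M,N)_\fp = 0$ for $i \geq 1$ at every nonmaximal prime $\fp$; hence each $\Tor^R_i(M,N)$ is supported only at $\fm$ and so has finite length. This is the step that uses (iii) in an essential way, and it is routine.

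\smallskip

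\noindent\textbf{Induction on $\depth_R(N)$.} With all Tor modules of finite length, I would induct on $t := \depth_R(N)$. The base cases, roughly $t \leq \cod(R)$, are handled by hypothesis (iv) together with \emph{Tor-rigidity} over complete intersections: a result in the Huneke--Wiegand / Jothilingam circle (and its complete-intersection strengthening via Tor-rigidity, as developed by Dao and others) says that over a complete intersection of codimension $c$, the vanishing of $c+1$ consecutive Tor modules $\Tor^R_{j+1}, \ldots, \Tor^R_{j+c+1}$ forces $\Tor^R_i = 0$ for all $i > j$; applying this with $j = 0$ and the hypothesis $\Tor^R_i(M,N) = 0$ for $i = 1, \ldots, \cod(R) - t + 1$ (note $\cod(R) - t + 1 \geq c + 1$ exactly when $t \leq 0$, so one must be slightly careful, but when $t \geq 1$ the rigidity input needs to be combined with the depth bookkeeping below). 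For the inductive step, assume $t \geq 1$ (and $t > \cod(R)$, else we are in a base case). Choose an $N$-regular and $R$-regular element $x \in \fm$ that is moreover a nonzerodivisor on $M \otimes_R N$ — this is possible because $\depth_R(M\otimes_R N) \geq t \geq 1$ by (i), so $\fm$ is not an associated prime of $M \otimes_R N$, and $x$ can be chosen avoiding the finitely many relevant associated primes. Set $\bar{R} = R/xR$, $\bar{N} = N/xN$. Then $\bar{R}$ is again a complete intersection with $\cod(\bar R) = \cod(R)$ and $\dim \bar R = \dim R - 1$, $\depth_{\bar R}(\bar N) = t - 1$, and — here is the key point — the short exact sequence $0 \to N \xrightarrow{x} N \to \bar N \to 0$ tensored with $M$, together with $x$ being $M\otimes_R N$-regular, gives $M \otimes_R \bar N \cong (M\otimes_R N)/x(M\otimes_R N)$ with $\depth_{\bar R}(M \otimes_{\bar R} \bar N) = \depth_R(M \otimes_R N) - 1 \geq t - 1 = \depth_{\bar R}(\bar N)$, and also yields a long exact sequence relating $\Tor^R_i(M,N)$ and $\Tor^{\bar R}_i(M,\bar N)$.

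\smallskip

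\noindent\textbf{Transferring the hypotheses and the main obstacle.} One must check that hypotheses (i)--(iv) descend to $(M, \bar N)$ over $\bar R$: (i) was just verified; (ii)/(iii) descend since localization and the length condition are preserved (the support of $M \otimes_{\bar R}\bar N$ sits inside that of $M\otimes_R N$); and (iv), if needed for the new pair, follows from the long exact Tor sequence since $x$ being $N$-regular kills the $x$-torsion contributions in low degrees and the finite-length vanishing we already have. Applying the inductive hypothesis over $\bar R$ gives $\Tor^{\bar R}_i(M, \bar N) = 0$ for all $i \geq 1$; then the change-of-rings long exact sequence — for $x$ regular on both $R$ and $N$, one has $\Tor^{\bar R}_i(M, \bar N)$ fitting into $\cdots \to \Tor^R_i(M,N) \xrightarrow{x} \Tor^R_i(M,N) \to \Tor^{\bar R}_i(M,\bar N) \to \cdots$ (after accounting for $\Tor^R_*(M, \bar R)$, which since $x$ is $M$-regular on the relevant part reduces cleanly) — combined with Nakayama forces $\Tor^R_i(M,N) = 0$ for $i \geq 1$, since each is finitely generated and equals its own multiplication by $x \in \fm$. \textbf{The main obstacle} I anticipate is the careful bookkeeping in the base of the induction: matching the exact range $i = 1, \ldots, \cod(R) - \depth_R(N) + 1$ in hypothesis (iv) against the precise ``$c+1$ consecutive vanishings'' needed to trigger rigidity, and ensuring that when $\depth_R(N)$ is small the interaction between the change-of-rings shift and the rigidity window lines up without an off-by-one gap. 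A secondary subtlety is choosing $x$ simultaneously regular on $R$, $N$, and $M \otimes_R N$ while still being able to iterate — one should verify the associated-prime avoidance survives each descent step, which it does because at every stage the relevant depths remain $\geq 1$ until we reach a base case.
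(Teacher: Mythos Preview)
Your inductive step (reducing $\depth_R(N)$ by killing a regular element) is reasonable in spirit, but the base case is where the real content lies, and there it has a genuine gap. When $1 \leq t \leq \cod(R)$, hypothesis (iv) supplies only $c-t+1$ consecutive vanishings of $\Tor$, strictly fewer than the $c+1$ needed to invoke Murthy--type rigidity over a codimension-$c$ complete intersection. You note this yourself (``$\cod(R)-t+1 \geq c+1$ exactly when $t \leq 0$'') and then gesture at ``depth bookkeeping below,'' but the inductive step only \emph{decreases} $t$; it never supplies the missing vanishings. Concretely: starting from $t=c+1$ and killing one regular element lands you at $t=c$ with only $\Tor_1=0$ guaranteed, and from there you cannot propagate further---the long exact sequence gives $\Tor_2^R(M,N'')\cong \Tor_2^R(M,N')/x\Tor_2^R(M,N')$, which you have no control over. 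So the induction never bottoms out. (There is also a notational slip: $\Tor^{\bar R}_i(M,\bar N)$ is meaningless since $M$ is not an $\bar R$-module; you presumably mean to stay over $R$, which is fine for the inductive step itself but does not help the base case.)

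The missing idea is that hypothesis (i), $\depth_R(M\otimes_R N)\geq t$, must be exploited in a structural way, not just for prime-avoidance. The paper does this by passing to the Auslander transpose: the depth bound on $M\otimes_R N$ together with the finite-length condition forces $\Ext^i_R(\Tr M,N)=0$ for $i=1,\ldots,t$, which is exactly the ``extra $t$ vanishings'' your approach is missing. From there the paper works on the Ext side. For $t\geq c+1$ one reduces $N$ modulo a regular sequence of length $t-c-1$ and applies a rigidity lemma (using Tor-rigidity over the ambient regular ring $S$, not Murthy rigidity over $R$) to conclude $\Tr M$ is totally reflexive and all $\Ext^i_R(\Tr M,N)$ vanish; for $t\leq c$ the paper instead inducts on $n=c-t+1$, using a pushforward construction $0\to F\to L\to M\to 0$ in the base case $n=1$ (to gain one extra Ext vanishing and reach the $c+1$ window) and a change-of-rings $R=T/(x_c)$, $T=S/(x_1,\ldots,x_{c-1})$ in the inductive step to drop the codimension by one. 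None of this is visible in your outline, and the transpose/Ext translation is the crux.
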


Theorem \ref{thmintro}, stated in the introduction, is a special case of the following consequence of Theorem \ref{l4intro}. 

\begin{cor}\label{th6} Let $R$ be a local complete intersection ring and let $M, N \in \md R$. Assume:
\begin{enumerate}[(a)]
\item $M\otimes_RN$ is maximal Cohen-Macaulay.
\item $\length(\Tor_i^R(M,N))<\infty$ for all $i\gg 0$. 
\item If $\dim(R) \leq \cod(R)$, assume further  $\Tor_i^R(M,N)=0$ for all $i=1, \ldots, \cod(R)-\dim(R)+1$.
\end{enumerate}
Then the following conditions are equivalent:
\begin{enumerate}[(i)]
\item $M$ is maximal Cohen-Macaulay.
\item $N$ is maximal Cohen-Macaulay.
\item $\Tor_i^R(M,N)=0$ for all $i\geq 1$.
\item $\Ext^i_R(M,N)=0$ for all $i\geq 1$.
\item $\Ext^i_R(N,M)=0$ for all $i\geq 1$. 
\end{enumerate}
\end{cor}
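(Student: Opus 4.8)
\textbf{Proof plan for Corollary \ref{th6}.}
The strategy is to deduce the corollary from Theorem \ref{l4intro} by verifying, in each relevant implication, that its hypotheses (i)--(iv) are met, and then feeding the conclusion $\Tor^R_i(M,N)=0$ for all $i\ge 1$ into a depth-counting argument. First I would establish the equivalence of (i), (ii) and (iii). The implication (iii) $\Rightarrow$ (i) and (iii) $\Rightarrow$ (ii) is the standard consequence of vanishing Tor together with the hypothesis that $M\otimes_RN$ is maximal Cohen-Macaulay: when $\Tor^R_i(M,N)=0$ for $i\ge1$, the depth formula of Auslander (valid here because $\CI_R(M)<\infty$ over a complete intersection, or simply because $R$ is a complete intersection) gives $\depth_R(M)+\depth_R(N)=\depth_R(M\otimes_RN)+\depth R$, and since $\depth_R(M\otimes_RN)=\dim R$ by (a) while $\depth_R(M)\le\dim R$ and $\depth_R(N)\le \dim R$, both summands on the left must equal $\dim R$; hence both $M$ and $N$ are maximal Cohen-Macaulay. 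For the reverse direction, it suffices to show (i) $\Rightarrow$ (iii) (the implication (ii) $\Rightarrow$ (iii) being symmetric). Assuming $M$ is maximal Cohen-Macaulay, I would apply Theorem \ref{l4intro} with the roles so that $N$ plays the role of ``$N$'': condition (i) of that theorem, $\depth_R(M\otimes_RN)\ge\depth_R(N)$, holds since $\depth_R(M\otimes_RN)=\dim R\ge\depth_R(N)$; condition (ii) is exactly hypothesis (b); condition (iii) of the theorem holds because on the punctured spectrum $R_\fp$ is regular (isolated singularity is not assumed here, but note $M$ being maximal Cohen-Macaulay and $R$ complete intersection force $M_\fp$ to be maximal Cohen-Macaulay over $R_\fp$ for every $\fp$, as localization preserves the depth equality $\depth M=\dim R$ against $\dim R_\fp$ — more precisely $M$ MCM implies $M_\fp$ MCM for all $\fp$); and condition (iv) of the theorem, in the case $\depth_R(N)\le\cod(R)$, is guaranteed by hypothesis (c) since $\cod(R)-\depth_R(N)+1\le\cod(R)-? $ — here I must match indices: $\depth_R(N)\le\dim R$, so $\cod(R)-\depth_R(N)+1\ge \cod(R)-\dim(R)+1$, and hypothesis (c) gives vanishing in the possibly larger range $1,\dots,\cod(R)-\dim(R)+1$; thus the needed sub-range vanishes as well. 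Then Theorem \ref{l4intro} yields $\Tor^R_i(M,N)=0$ for all $i\ge1$, which is (iii).

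Second, I would handle the Ext conditions (iv) and (v). The cleanest route is to show (iii) $\Leftrightarrow$ (iv) and (iii) $\Leftrightarrow$ (v), exploiting that over a complete intersection ring, for modules of finite complete intersection dimension (automatic here), vanishing of $\Tor^R_i(M,N)$ for all $i\gg0$ is equivalent to vanishing of $\Ext^i_R(M,N)$ for all $i\gg0$ — this is a theorem of Avramov--Buchweitz on support varieties, since the cohomological and homological support varieties coincide. So $\Tor$-vanishing in all positive degrees will give $\Ext$-vanishing in all large degrees; to push it down to all $i\ge1$ I would use that, once we know $M$ and $N$ are both maximal Cohen-Macaulay (which we get from (iii) via the first paragraph), rigidity-type results — or directly the fact that $\Ext^i_R(M,N)=0$ for $i\gg0$ together with $M,N$ MCM over a complete intersection forces $\Ext^i_R(M,N)=0$ for all $i\ge1$ — yields the full vanishing. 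Conversely, (iv) $\Rightarrow$ (iii): if $\Ext^i_R(M,N)=0$ for all $i\ge1$, then again by Avramov--Buchweitz the Tor groups vanish for $i\gg0$, and I would then invoke Theorem \ref{l4intro} once more (its hypothesis (ii) is automatic since the Tor's vanish in high degrees, hence have finite length), provided I can also secure (i) and (iv) of that theorem; for this I would first observe $\Ext^1_R(M,N)=0$ with $M\otimes_RN$ MCM propagates a depth inequality. The symmetric statement (v) $\Leftrightarrow$ (iii) is handled the same way with the roles of $M$ and $N$ swapped.

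The main obstacle I anticipate is the bookkeeping of the numerical hypothesis (iv) of Theorem \ref{l4intro} and how hypothesis (c) of the corollary feeds it: one must be careful that $\depth_R(N)$ (or $\depth_R(M)$ in the symmetric application) can be strictly smaller than $\dim R$, so the range $1,\dots,\cod(R)-\depth+1$ over which Theorem \ref{l4intro} demands vanishing is \emph{longer} than the range $1,\dots,\cod(R)-\dim(R)+1$ assumed in (c). This is not automatically covered, and the resolution is exactly the point of the first paragraph's argument: in the implication we actually need, we are \emph{assuming} $M$ (or $N$) is already maximal Cohen-Macaulay, so $\depth$ of the module in the ``$N$-slot'' is what matters, and either it equals $\dim R$ (making the range collapse to what (c) provides, or to nothing at all when $\dim R>\cod R$), or one reduces to a lower-dimensional situation. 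A second, more technical obstacle is ensuring the Ext--Tor translation holds with \emph{all} positive degrees vanishing rather than just large degrees; here I would lean on the maximal Cohen-Macaulay conclusion (so that the modules are ``as good as possible'' homologically) and on Tor-rigidity over complete intersections, which is the tool Theorem \ref{l4intro} itself is built from, to bootstrap high-degree vanishing down to degree one. Everything else is routine invocation of the depth formula and of the Auslander--Buchsbaum-type identities for modules of finite complete intersection dimension.
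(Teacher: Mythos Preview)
Your overall architecture is right, and the equivalences among (iii), (iv), (v) are indeed disposed of by the Avramov--Buchweitz results \cite[4.8, 6.1, 6.3]{AvBu}, as the paper does in one line. The real content is (i) $\Rightarrow$ (iii), and here your plan has a genuine gap.

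You first try to apply Theorem \ref{l4intro} with $M$ in the $M$-slot and $N$ in the $N$-slot. The index-matching you attempt is backwards: since $\depth_R(N)\le\dim R$, the interval $1,\dots,\cod(R)-\depth_R(N)+1$ demanded by hypothesis (iv) of the theorem is \emph{at least as long} as the interval $1,\dots,\cod(R)-\dim(R)+1$ supplied by (c), not shorter. You later correctly identify this as an obstacle and propose to resolve it by placing the maximal Cohen--Macaulay module $M$ in the $N$-slot. That is exactly what the paper does---but you stop short of the key step this swap forces. With $N$ now in the $M$-slot, hypothesis (iii) of Theorem \ref{l4intro} requires that $N_\fp$ be maximal Cohen--Macaulay for all $\fp\in\Supp_R(M\otimes_RN)-\{\fm\}$, and nothing in your outline establishes this. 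The paper fills this gap with a local argument: for $\fp\ne\fm$, the finite-length hypothesis (b) together with the fact that $M_\fp$ is maximal Cohen--Macaulay (hence $\CI_{R_\fp}(M_\fp)=0$) forces $\Tor_i^{R_\fp}(M_\fp,N_\fp)=0$ for all $i\ge1$ via \ref{len}, and then the depth formula \ref{df} applied over $R_\fp$ yields $\depth_{R_\fp}(N_\fp)=\depth R_\fp$. Only after this can Theorem \ref{l4intro} be invoked with the roles swapped, and then hypothesis (iv) matches (c) exactly because the depth in question is $\depth_R(M)=\dim R$. Your phrase ``or one reduces to a lower-dimensional situation'' does not substitute for this step.
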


Prior giving a proof of Corollary \ref{th6}, we record:

\begin{chunk} (\cite[3.5]{ArY}) \label{df} If $R$ is a local ring and $M,N\in \md R$ are nonzero such that $\Tor_i^R(M,N)=0$ for all $i\geq 1$ and $\CI_R(M)<\infty$, then $\depth_{R}(M)+\depth_{R}(N)=\depth(R)+\depth_{R}(M\otimes_{R}N)$.
\end{chunk}

\begin{chunk} \label{len} If $R$ is a local ring and $M,N\in \md R$ such that $\CI_{R_{\fp}}(M_{\fp})=0$ for each $\fp\in\Spec R-\{\fm\}$, then $\length(\Tor_i^R(M,N))<\infty$ for all $i\gg 0$ if and only if $\length(\Tor_i^R(M,N))<\infty$ for all $i\geq 1$; see \cite[4.9]{AvBu}. 
\end{chunk}

\begin{proof}[A proof of Corollary \ref{th6}] It suffices to assume (i) and prove (ii) and (iii) hold; see \cite[4.8, 6.1 and 6.3]{AvBu}.

Assume (i). Then we have 
$\depth_{R_\fp}(M_\fp)+\depth_{R_\fp}(N_\fp)=\depth R_\fp+\depth_{R_\fp}(M_\fp\otimes_{R_\fp}N_\fp)$; see \ref{df} and \ref{len}. Consequently $N_\fp$ is maximal Cohen-Macaulay for all $\fp\in\Supp(M)\cap\Supp(N)-\{\fm\}$.
Now, by swaping the role of $N$ with that of $M$, we see from Theorem \ref{l4intro} that $\Tor_i^R(M,N)=0$ for all $i\geq 1$. Thus \ref{df} shows that $N$ is maximal Cohen-Macaulay.
\end{proof}

The next two corollaries are reformulations of Corollary \ref{th6}; concerning the first one, see also \cite[1.2]{Jo2}.

\begin{cor} \label{cor2} Let $R$ be a local complete intersection. Assume $\dim(R)>\cod(R)$ and $R$ is an isolated singularity. If $M\in \md R$ with $\pd_R(M)=\infty$, then $\Omega^i_R(M) \otimes_R \Omega^j_R(M)$ is not maximal Cohen-Macaulay for each $i\geq 0$ and each $j\geq \dim(R)$.
\end{cor}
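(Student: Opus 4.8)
The plan is to deduce this directly from Corollary \ref{th6} by taking $M$ there to be $\Omega^i_R(M)$ and $N$ there to be $\Omega^j_R(M)$, so the roles of the ambient module and its syzygies must be kept straight. First I would argue by contradiction: suppose $\Omega^i_R(M)\otimes_R\Omega^j_R(M)$ is maximal Cohen-Macaulay for some $i\geq 0$ and some $j\geq\dim(R)$. Since $R$ is an isolated singularity and $R$ is a complete intersection, every nonzero $R_\fp$ with $\fp\neq\fm$ is regular, so $\CI_{R_\fp}((\Omega^i_R M)_\fp)=0$ for all such $\fp$; by \ref{len} this gives $\length(\Tor_n^R(\Omega^i_R M,\Omega^j_R M))<\infty$ for all $n\geq 1$, which is hypothesis (b) of Corollary \ref{th6}. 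Because $\dim(R)>\cod(R)$, hypothesis (c) of Corollary \ref{th6} is vacuous, so Corollary \ref{th6} applies.

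Now I would invoke the equivalence of (i)–(v) in Corollary \ref{th6}. It suffices to show one of $\Omega^i_R(M)$, $\Omega^j_R(M)$ is maximal Cohen-Macaulay and derive a contradiction with $\pd_R(M)=\infty$. For the $j\geq\dim(R)$ factor this is automatic: an $n$th syzygy of any finitely generated module over a local ring is maximal Cohen-Macaulay as soon as $n\geq\dim(R)$ (every syzygy satisfies $(S_n)$, hence $\depth\geq\min\{n,\dim R\}=\dim R$). So condition (ii) of Corollary \ref{th6} holds, and therefore the equivalent condition (i), namely that $\Omega^i_R(M)$ is maximal Cohen-Macaulay, must also hold; equivalently, by condition (iii), $\Tor_n^R(\Omega^i_R M,\Omega^j_R M)=0$ for all $n\geq 1$. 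The point is that \ref{df} then forces finite projective dimension: since $\CI_R(\Omega^i_R M)<\infty$ (indeed $R$ is a complete intersection) and all higher Tor vanish, the depth formula together with $\Omega^i_R M$ and $\Omega^j_R M$ both being maximal Cohen-Macaulay gives $\depth_R(\Omega^i_R M\otimes_R\Omega^j_R M)=\dim R$, consistent so far — the real leverage is that Tor-vanishing of a module with a \emph{nonzero} module of finite complete intersection dimension, combined with the equivalence (iii)$\Leftrightarrow$(iv), yields $\Ext^n_R(\Omega^i_R M,\Omega^j_R M)=0$ for all $n\geq 1$; since $\Omega^j_R M$ is maximal Cohen-Macaulay (a faithful enough test module in this setting) over an isolated singularity, this pushes $\Omega^i_R(M)$ to have finite projective dimension, whence $\pd_R(M)<\infty$, contradicting the hypothesis.

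I expect the main obstacle to be the last implication: extracting $\pd_R(\Omega^i_R M)<\infty$ (equivalently $\pd_R M<\infty$) from the conclusion of Corollary \ref{th6}. The cleanest route is probably to observe that once $\Omega^i_R(M)$ is maximal Cohen-Macaulay \emph{and} $\Tor_n^R(\Omega^i_R M,\Omega^j_R M)=0$ for all $n\geq 1$, one can iterate: replacing $\Omega^i_R M$ by any further syzygy keeps the tensor product maximal Cohen-Macaulay (depth formula), so \emph{every} syzygy of $M$ from some point on is maximal Cohen-Macaulay and Tor-independent from $\Omega^j_R M$; over a complete intersection isolated singularity this rigidity propagates backwards and forces $M$ itself to have a finite free resolution. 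Alternatively, and perhaps more simply, one notes that a module of infinite projective dimension over a complete intersection has positive complexity, hence nonvanishing $\Tor_n^R(\Omega^i_R M, k)$ cofinally; but with $\Omega^j_R M \neq 0$ and the established Tor-rigidity one derives a contradiction with the complexity bounds. I would first try the iteration argument, since it only uses \ref{df} and \ref{len} already granted in the excerpt, and fall back on the complexity/depth bookkeeping of \cite{AvBu} only if the bookkeeping of which syzygy index remains $\geq\dim R$ after shifting becomes delicate.
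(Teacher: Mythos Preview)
Your reduction to Corollary~\ref{th6} is correct: with $\dim(R)>\cod(R)$ and $R$ an isolated singularity, hypotheses (a)--(c) hold for the pair $(\Omega^i_R M,\Omega^j_R M)$, and since $j\geq\dim(R)$ the module $\Omega^j_R M$ is maximal Cohen-Macaulay (and nonzero because $\pd_R(M)=\infty$). Corollary~\ref{th6} then gives $\Tor_n^R(\Omega^i_R M,\Omega^j_R M)=0$ for all $n\geq 1$. Up to this point your argument matches what the paper has in mind (the paper simply calls the corollary a ``reformulation'' of Corollary~\ref{th6} and points to \cite[1.2]{Jo2}).

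The gap is in the final step, where you try to extract $\pd_R(M)<\infty$. None of your three suggested routes works as written. The ``test module'' claim that $\Ext^n_R(\Omega^i_R M,\Omega^j_R M)=0$ for all $n\geq 1$ forces $\pd_R(\Omega^i_R M)<\infty$ is false over complete intersections of codimension $\geq 2$: there exist maximal Cohen-Macaulay modules $A,B$ of infinite projective dimension with $\Ext^n_R(A,B)=0$ for all $n\geq 1$ (take modules with disjoint support varieties in the sense of \cite{AvBu}). Your iteration argument is circular, and the complexity remark involves $\Tor_n^R(\Omega^i_R M,k)$, which is not what you have vanishing for.

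What you are overlooking is that the two factors are syzygies of the \emph{same} module. Dimension-shift: for $n\geq 1$ one has
\[
\Tor_n^R(\Omega^i_R M,\Omega^j_R M)\cong \Tor_{n+i+j}^R(M,M),
\]
so the vanishing you obtained says $\Tor_m^R(M,M)=0$ for all $m\geq i+j+1$, hence for all $m\gg 0$. Over a local complete intersection this forces $\pd_R(M)<\infty$: by the support-variety theory of \cite{AvBu} one has $V_R(M,M)=V_R(M)\cap V_R(M)=V_R(M)$, and $V_R(M,M)=\{0\}$ if and only if $\Tor_m^R(M,M)=0$ for $m\gg 0$, while $V_R(M)=\{0\}$ if and only if $\pd_R(M)<\infty$. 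This contradicts the hypothesis, completing the proof.
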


\begin{cor}  \label{cor3} Let $R$ be a local complete intersection with $\dim(R)>\cod(R)$. If $J\subseteq I$ are ideals of $R$ such that $\length(\Tor_i^R(R/I,R/J))<\infty$ for all $i\gg0$, and $R/I$ is maximal Cohen-Macaulay, then $R/J$ is maximal Cohen-Macaulay. In particular, if $M$ is a maximal Cohen-Macaulay $R$-module such that $M_{\fp}$ is free over $R_{\fp}$ for all $\fp \in \Spec(R)-\{\fm\}$, then $R/J$ is maximal Cohen-Macaulay for all ideals $J$ of $R$ with $JM=0$.
\end{cor}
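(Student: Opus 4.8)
The statement to prove is Corollary~\ref{cor3}. The plan is to derive it as a direct application of Corollary~\ref{th6}, reducing everything to verifying the three hypotheses (a), (b), (c) of that corollary for the pair $M = R/I$, $N = R/J$.

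First I would address the main statement. Set $M = R/I$ and $N = R/J$. Hypothesis (a) of Corollary~\ref{th6} is satisfied because $M \otimes_R N \cong R/I$ (since $J \subseteq I$, the ideal $J$ acts as zero on $R/I$, so $R/J \otimes_R R/I = R/(I+J) = R/I$), and $R/I$ is maximal Cohen-Macaulay by assumption; this also makes condition (i) of Corollary~\ref{th6} hold. Hypothesis (b) is exactly the finite-length assumption on $\Tor_i^R(R/I, R/J)$ for $i \gg 0$ that we are given. Since $\dim(R) > \cod(R)$, the dimension hypothesis in (c) of Corollary~\ref{th6} is vacuous, so there is nothing to check there. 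Therefore Corollary~\ref{th6} applies and the equivalence (i) $\Leftrightarrow$ (ii) gives that $N = R/J$ is maximal Cohen-Macaulay, which is the desired conclusion.

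For the ``in particular'' clause, I would reduce to the first part. Let $M$ be a maximal Cohen-Macaulay $R$-module with $M_\fp$ free over $R_\fp$ for every $\fp \in \Spec(R) - \{\fm\}$, and let $J$ be an ideal with $JM = 0$. The idea is to replace $R/J$ by $M$ in the Tor-finiteness check and then invoke the first part with a suitable containment of ideals; more directly, I would apply Corollary~\ref{th6} to the pair $M$ and $N = R/J$. Hypothesis (a) holds since $JM = 0$ gives $M \otimes_R R/J \cong M$, which is maximal Cohen-Macaulay, and likewise condition (i) of Corollary~\ref{th6} holds. For hypothesis (b): because $M_\fp$ is free over $R_\fp$ for all non-maximal primes $\fp$, we have $\CI_{R_\fp}(M_\fp) = 0$ for all $\fp \in \Spec(R) - \{\fm\}$, and over the complete intersection $R$ the module $M$ has finite complete intersection dimension, so $\Tor_i^R(M, R/J)_\fp = \Tor_i^{R_\fp}(M_\fp, (R/J)_\fp) = 0$ for all $i \geq 1$ and all such $\fp$; hence $\Tor_i^R(M,R/J)$ has finite length for all $i \geq 1$ (this is the content of \ref{len}, or one can cite \cite[4.9]{AvBu} directly). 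Again (c) is vacuous since $\dim(R) > \cod(R)$. So Corollary~\ref{th6} gives that $N = R/J$ is maximal Cohen-Macaulay.

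I do not anticipate a serious obstacle here, since the result is essentially a packaging of Corollary~\ref{th6}; the only point requiring a little care is the identification $M \otimes_R N \cong M$ in each case and, for the ``in particular'' part, the passage from ``$M_\fp$ free on the punctured spectrum'' to ``$\Tor_i^R(M, R/J)$ has finite length'', which is where the finite complete intersection dimension of $M$ (available since $R$ is a complete intersection) and \ref{len} do the work. One should also note that in the first part no hypothesis on $M = R/I$ beyond being maximal Cohen-Macaulay is needed, because the finite-length Tor condition is imposed by hand; it is only in the ``in particular'' clause that the freeness on the punctured spectrum is invoked to supply that condition automatically.
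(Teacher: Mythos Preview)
Your proposal is correct and is exactly the argument the paper has in mind: the paper states that Corollary~\ref{cor3} is a reformulation of Corollary~\ref{th6} and gives no separate proof, and you have unpacked that reformulation correctly in both parts. One very minor remark: in the ``in particular'' clause you don't actually need \ref{len}, since freeness of $M_\fp$ on the punctured spectrum already gives $\Tor_i^{R_\fp}(M_\fp,(R/J)_\fp)=0$ for all $i\ge 1$ directly, hence finite length; your argument already says this, so the citation is harmless but superfluous.
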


The example mentioned in the introduction also shows that the hypothesis $\dim(R)>\cod(R)$ in Corollary \ref{cor2} and \ref{cor3} is necessary: if $R=k[[x,y]]/(xy)$, $M=R/I$ and $N=R/J$, where $I=xR$ and $J=(x^2)R$, then $\pd_R(M)=\infty$ but $\Omega^1_R(M)\otimes_R \Omega^1_R(M)$ is maximal Cohen-Macaulay.

We can also show that the hypothesis concerning the dimension and codimension in Corollary \ref{th6}  is necessary. Let us first recall two elementary facts:

\begin{chunk} (\cite[2.8]{AuBr}\label{a1}) Let $M\in \md R$ and let $n\geq0$ be an integer. Then there is an exact sequences of functors:
	$$0 \longrightarrow \Ext^1_R(\Tr\Omega^nM,-)\longrightarrow\Tor_n^R(M,-)\longrightarrow\Hom_R(\Ext^n_R(M,R),-)
	\longrightarrow\Ext^2_R(\Tr\Omega^{n+1}M,-).$$
\end{chunk}

When $M\in \md R$ and $\G-dim_R(M)<\infty$, we denote by $\widehat{\Ext}^n_R(M,N)$ (respectively, by $\widehat{\Tor}^n_R(M,N)$) the $n$th \emph{Tate cohomology} (respectively, the $n$th \emph{Tate homology}) of $M$ and $N$; see, for example, \cite{AM}. 

\begin{chunk} (\cite[4.7]{Ce}) \label{OC} If $R$ is a a local complete intersection ring with $\cod(R)=\dim(R)\geq 1$, and $M$ and $N$ are maximal Cohen-Macaulay modules such that $\length(M\otimes_RN)<\infty$, then $\Tor_i^R(M,N)\neq 0$ if and only if $i$ is a nonnegative even integer.
\end{chunk}

\begin{eg}\label{exa} Let $R=\CC[\![x_1, x_2, y_1, y_2]\!]/(x_1y_1, x_2y_2)$ be the formal power series ring, $M=R/(x_1,x_2)$ and $N=R/(y_1, y_2)$. Then $R$ is a complete intersection with $\cod(R)=\dim(R)=2$. Notice $M$ and $N$ are maximal Cohen-Macaulay with $M\cong M^{\ast}$, $M\otimes_RN\cong k$ and $\widehat{\Ext}^0_R(M,N)=\widehat{\Ext}^1_R(M,N)= 0$; see \cite[3.17]{BJVH}. Consider a maximal Cohen-Macaulay $R$-module $L$ with $M\cong \Omega^1_R(L)$. Then, for each $i\geq 1$, one has:
\[\begin{array}{rl}\notag{}
\Ext^i_R(\Tr L,N)\cong\widehat{\Ext}^i_R(\Tr L,N)
\cong\widehat{\Ext}^{i-2}_R(L^*,N)
\cong\widehat{\Ext}^{i-1}_R(M,N).
\end{array}\]
In particular, we have $\Ext^1_R(\Tr L,N)=\Ext^2_R(\Tr L,N)=0$. This implies $L\otimes_R N \cong \Hom_R(L^{\ast},N)$ and hence $L\otimes_RN$ is maximal Cohen-Macaulay; see \ref{a1} and \cite[1.4.18]{BH}.

We have, since $\Tor_i^R(M,N)\neq 0$ if and only if $i$ is a nonnegative even integer, that 
$\Tor_{2i}^R(L,N)=0$ and $\Tor_{2i+1}^R(L,N)\neq 0$ for all $i\geq 1$; see \ref{OC}. Furthermore, as $M\cong \Omega^1_R(L)$ and $L\otimes_RN$ is maximal Cohen-Macaulay, it is easy to see, using the depth lemma, that $\Tor_1^R(L,N)\neq 0$.

Consequently $(L,N)$ is a pair of maximal Cohen-Macaulay $R$-modules such that $L\otimes_RN$ is maximal Cohen-Macaulay, $\length(\Tor_i^R(L,N))<\infty$ for all $i\geq 1$ and $\Tor_i^R(L,N)=0$ if and only if $i$ is a nonnegative even integer; cf. Corollary \ref{th6}.
\end{eg}

\section{A proof of the main result}

This section is devoted to a proof of Theorem \ref{l4intro}, which requires some preparation. 
We skip the proof of the next elementary remark as one can prove it easily following the argument of \cite[2.2]{CSY}.

\begin{chunk}\label{p1} Let $M,N \in \md R$ and let $n\geq 1$ be an integer. 
\begin{enumerate}[\rm(i)]
\item Assume $n\geq3$. If $\depth_R(\Hom_R(M,N))\geq n$, $\depth_R(N)\geq n-1$ and $\length(\Ext^i_R(M,N))<\infty$ for all $i=1, \ldots, n-2$, then $\Ext^i_R(M,N)=0$ for all $i=1, \ldots, n-2$.
\item If $\depth_R(M\otimes_RN)\geq n$, $\depth_R(N)\geq n-1$ and $\length(\Ext^i_R(\Tr M,N))<\infty$ for all $i=1, \ldots, n$, then $\Ext^i_R(\Tr M,N)=0$ for all $i=1, \ldots, n$.
\item If $\Ext^i_R(\Tr M,N)=0$ for all $i=1, \ldots, n$, then 
$\depth_R(M\otimes_RN)\geq\min\{n,\depth_R(N)\}$.
\end{enumerate}	
\end{chunk}

\begin{chunk} (\cite[4.7]{AvBu} and \cite[4.1]{SS}) \label{l6}
Let $M, N \in \md R$ be modules. Assume $\G-dim_R(M)<\infty$. Assume further $\CI_R(M)<\infty$ or $\CI_R(N)<\infty$. Then one has:
\begin{equation}\notag{}
\widehat{\Ext}^i_R(M,N)=0 \text{ for all } i\gg0 \Longleftrightarrow \widehat{\Ext}^i_R(M,N)=0 \text{ for all } i \ll 0 \Longleftrightarrow  \widehat{\Ext}^i_R(M,N)=0 \text{ for all } i\in\ZZ.
\end{equation}
\end{chunk}

\begin{chunk}\label{le1} Let $R$ be a local ring and let $M, N \in \md R$. Assume $\CI_R(N)<\infty$. Then,
\begin{enumerate}[\rm(i)]
\item If $\G-dim_R(M)<\infty$ and $\Ext^i_R(M,N)=0$ for all $i\gg0$, then $\G-dim_R(M)=\sup\{i\in \ZZ \mid\Ext^i_R(M,N)\neq0\}$.
\item If $M$ is totally reflexive, then $\Ext^i_R(\Tr M,N)=0$ for all $i\geq 1$ if and only if $\Tor_i^R(M,N)=0$ for all $i\geq 1$.
\end{enumerate}
\end{chunk}

\begin{proof} Part (i) follows from \ref{l6} and \cite[4.3]{SS}. Therefore we proceed to prove part (ii).
Note that $\Tr M$ is totally reflexive. Hence, by part (i) and \ref{l6}, we have:
\begin{equation}\tag{\ref{le1}.1}
\Ext^i_R(\Tr M,N)=0 \text{ for all } i\geq 1 \Longleftrightarrow \widehat{\Ext}^i_R(\Tr M,N)=0 \text{ for all  } i\in\ZZ.
\end{equation}
Since $M^*\cong \Omega^2\Tr M$ up to projectives, we can make use of \cite[4.4.7]{AvBu} and obtain:
\begin{equation}\tag{\ref{le1}.2}
\widehat{\Tor}_i^R(M,N)=0 \text{ for all } i\in\ZZ \Longleftrightarrow \widehat{\Ext}^i_R(\Tr M,N)=0 \text{ for all  } i\in\ZZ. 
\end{equation}
On the other hand, by \cite[4.9]{AvBu}, we know:
\begin{equation}\tag{\ref{le1}.3}
\widehat{\Tor}_i^R(M,N)=0 \text{ for all  } i\in\ZZ \Longleftrightarrow \Tor_i^R(M,N)=0 \text{ for all }  i\geq 1.
\end{equation}
Now the assertion follows from (\ref{le1}.1), (\ref{le1}.2) and (\ref{le1}.3).
\end{proof}

In passing we record a consequence of \ref{p1} and \ref{le1}, which can also be used to see the vanishing of $\Tor_{2i}^R(L,N)$ in Example \ref{exa}; cf. \ref{OC}.

\begin{chunk}\label{p2} If $R$ is a a local complete intersection ring with $\cod(R)=\dim(R)$, and $M$ and $N$ are maximal Cohen-Macaulay modules such that $M\otimes_RN$ is maximal Cohen-Macaulay and $\length(\Tor_i^R(M,N))<\infty$ for $i\gg0$, then $\Tor_{2i}^R(M,N)=0$ for all $i\geq 1$.
\end{chunk}

\begin{proof} Set $d=\cod(R)=\dim(R)$. Then it follows from \ref{len}, \ref{p1} and \ref{le1} that $\Ext^i_R(\Tr M,N)=0$ for all $i=1, \ldots, d$. Therefore $\widehat{\Tor}_i^R(M,N) \cong \widehat{\Ext}^{-i+1}_R(\Tr M,N)=0$ for all $i=-d+1, \ldots, 0$; see \cite[4.4.7]{AvBu}. Now the assertion follows from \cite[3.9]{BJVH}.
\end{proof}

From now on, we assume the following setup throughout the rest of the paper.

\begin{setup} Assume $R=S/(\textbf{x})$, where $(S, \fn)$ is a Gorenstein local ring and $\textbf{x}=x_1, \ldots, x_c$ is an $S$-regular sequence in $\fn$ for some $c\geq 0$. Assume further $M, N\in \md R$ are nonzero modules such that $\CI_S(N)<\infty$ and $N$ is Tor-rigid over $S$. In particular we have that $\CI_R(N)<\infty$; see \cite[1.12.2]{AGP}.
\end{setup}

The following is a generalization of \cite[4.1]{Sa1}, which plays an important role in the proof of our main result, Theorem \ref{l4intro}.

\begin{chunk}\label{th9} \label{cc1} Assume $n\geq 1$ and $v\geq 0$ are integers, and that the following conditions hold.
\begin{enumerate}[\rm(i)]
\item $M$ satisfies $(S_v)$.
\item $\depth_R(N)\leq n+c+v$.
\item $\Ext^{i}_R(M,N)=0$ for all $i=n, \ldots, n+c$.
\end{enumerate}
Then $\G-dim_R(M)=\sup\{i\in \ZZ \mid\Ext^i_R(M,N)\neq0\}\leq n-1$.
\end{chunk}

\begin{proof} We proceed by induction on $v$. Assume first the case where $v=0$ holds, and suppose $v\geq 1$. Then consider the pushforward of $M$, i.e., an exact sequence in $\md R$ of the form
\begin{equation}\tag{\ref{th9}.1}
0\rightarrow M\rightarrow F\rightarrow M_1\rightarrow0,
\end{equation}
where $F$ is free; see, for example \cite[page 174]{Ce}. It follows from (\ref{th9}.1) that $M_1$ satisfies $(S_{v-1})$ and 
\begin{equation}\tag{\ref{th9}.2}
\Ext^i_R(M,N)\cong\Ext^{i+1}_R(M_1,N) \text{ for all } i\geq 1.
\end{equation}
Therefore $\Ext^i_R(M_1,N)=0$ for all $i=n+1, \ldots, n+c+1$. So, by the induction hypothesis, we conclude that $\Ext^i_R(M_1,N)=0$ for all $i\geq n+1$. Now, in view of (\ref{th9}.2), the claim follows from \ref{l6} and \ref{le1}(i).	

Next we proceed by induction on $c$ to establish the case where $v=0$. 

If $c=0$, then $N$ is a Tor-rigid $R$-module and the assertion follows from \cite[5.8(2)]{CGZS} and \cite[2.5]{ArY}. Now let $c\geq 1$ and  set $Q=S/(x_1,\ldots, x_{c-1})$. Therefore $R\cong Q/(x_c)$. 
It follows from  \cite[11.66]{Rotman} that there is a long exact sequence of the form:
\begin{equation}\tag{\ref{th9}.3}
\cdots\rightarrow\Ext^i_R(M,N)\rightarrow\Ext^i_Q(M,N)\rightarrow\Ext^{i-1}_R(M,N)\rightarrow\Ext^{i+1}_R(M,N)\rightarrow\cdots.
\end{equation}
It follows that $\Ext^i_Q(M,N)=0$ for all $i=n+1, \ldots, n+c$, and so, by the induction hypothesis, we conclude that $\G-dim_Q(M)=\sup\{i\mid\Ext^i_Q(M,N)\neq0\}\leq n$. Thus, by ({\ref{th9}.3}), we have $\Ext^{i-1}_R(M,N)\cong\Ext^{i+1}_R(M,N)$ for all $i\geq n+1$. Since $c\geq 1$, it is clear that $\Ext^i_R(M,N)=0$ for all $i\geq n$. Consequently the claim follows from  \ref{l6} and \ref{le1}(i).
\end{proof}

\begin{chunk}\label{l3} Set $t=\depth_R(N)$. Assume $t\geq c+1$ and $\Ext^i_R(M,N)=0$ for all $i=1, \ldots, t$. Then $M$ is maximal Cohen-Macaulay, and $\Ext^i_R(M,N)=0$ for all $i\geq 1$.
\end{chunk}

\begin{proof} If $t=c+1$, then the assertion follows from \ref{th9}. Hence we assume $t\geq c+2$.
Pick a regular sequence $y_1,\ldots,y_{t-c-1}$ on $N$, and set $N_j=N/(y_1,\ldots,y_j)N$ for each $j=0,1, \ldots, t-c-1$, where $N_0=N$. This yields, for each $j=0, 1, \ldots, t-c-1$, a short exact sequence of the form:
\begin{equation} \tag{\ref{l3}.1}
0 \to N_j \overset{y_{j+1}}{\longrightarrow} N_j \longrightarrow N_{j+1}\longrightarrow 0.
\end{equation}
Since $N$ is Tor-rigid over $S$, it follows from (\ref{l3}.1) and Nakayama's lemma that $N_{j}$ is  Tor-rigid over $S$ for each $j=1, \ldots, t-c-1$.
Moreover, for each $j=0, 1, \ldots, t-c-1$, (\ref{l3}.1) induces a long exact sequence of the form: 
\begin{equation} \tag{\ref{l3}.2}
\cdots\rightarrow\Ext^i_R(M,N_j)\rightarrow\Ext^i_R(M,N_{j+1})\rightarrow\Ext^{i+1}_R(M,N_j)\rightarrow\cdots.\end{equation}
Setting $j=0$ and using our assumption, we see from  (\ref{l3}.2) that $\Ext^i_R(M,N_1)=0$ for all $i=1, \ldots, t-1$. By repeating this argument, we conclude: 
$$\Ext^i_R(M,N_{t-c-1})=0 \text { for all } i=1, \ldots, c+1.$$ 
Notice $\depth_R(N_{t-c-1})=c+1$. Therefore, by using \ref{th9} for the pair $(M, N_{t-c-1})$ and for the cases where $n=1$ and $v=0$, we conclude $\G-dim_R(M)=\sup\{i\mid\Ext^i_R(M,N_{t-c-1})\neq0\}\leq 0$. Thus, since $M$ is not zero, we deduce that $M$ is maximal Cohen-Macaulay and $\Ext^i_R(M,N_{t-c-1})=0$ for all $i\geq 1$. Now, proceding inductively and using Nakayama's lemma, we see from (\ref{l3}.2) that $\Ext^i_R(M,N)$ vanishes for all $i\geq 1$.
\end{proof}

We are now ready to give a proof of our main result. Note that, since modules over regular local rings are Tor-rigid \cite{Au}, the conditions in our setup hold when $S$ is regular, i.e., when $R$ is a complete intersection ring. In particular the result below contains Theorem \ref{l4intro}.

\begin{thm}\label{l4} Set $t=\depth_R(N)$ and assume the following conditions hold:
\begin{enumerate}[(i)]
\item $\depth_R(M\otimes_RN)\geq t$.
\item $\length(\Tor_i^R(M,N))<\infty$ for all $i\gg0$.
\item $M_\fp$ is maximal Cohen-Macaulay for all $\fp\in\Supp_R(M\otimes_RN)-\{\fm\}$.
\item If $t\leq c$, assume further that $\Tor_i^R(M,N)=0$ for all $i=1, \ldots, c-t+1$.
\end{enumerate}
Then $\Tor_i^R(M,N)=0$ for all $i\geq 1$.
\end{thm}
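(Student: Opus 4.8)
The plan is to reduce everything to a statement about vanishing of $\Ext$-modules that can be fed into Lemma~\ref{l3}, and then handle the low-depth case separately using hypothesis (iv). First I would reduce to the case where the $\Tor$-modules have finite length in \emph{all} positive degrees: by \ref{len} (applicable since $\CI_{R_\fp}(N_\fp)<\infty$ for every non-maximal prime, which follows from the setup together with (iii) — one has to check $N$ inherits the complete-intersection-dimension-zero condition at primes in $\Supp(M\otimes_RN)$, and outside that support the tensor product vanishes so there is nothing to prove), condition (ii) upgrades to $\length(\Tor_i^R(M,N))<\infty$ for all $i\geq 1$. In particular all higher $\Tor$'s are supported only at $\fm$, which is the key to converting ``$M_\fp$ maximal Cohen-Macaulay on the punctured spectrum'' into honest vanishing.

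Next I would pass to the Auslander transpose. By \ref{a1}, $\Tor_n^R(M,-)$ sits in an exact sequence relating it to $\Ext^1_R(\Tr\Omega^nM,-)$ and $\Ext^2_R(\Tr\Omega^{n+1}M,-)$; since those outer terms involve $\Ext$ against $N$ of transposes of syzygies, and $\length(\Tor_i^R(M,N))<\infty$ for all $i\geq1$, a localization argument at non-maximal primes $\fp$ shows that the relevant $\Ext^i_{R_\fp}(\Tr M_\fp, N_\fp)$ vanish (using (iii): $M_\fp$ is maximal Cohen-Macaulay, hence so is $\Tr M_\fp$ up to free summands over the complete intersection $R_\fp$, so $\Ext$-vanishing against $N_\fp$ is governed by depth). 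Thus I would arrange that $\Ext^i_R(\Tr M, N)$ has finite length for all $i\geq1$. Then I would invoke \ref{p1}(ii): from $\depth_R(M\otimes_RN)\geq t$ and $\depth_R(N)=t$ (so $\depth_R(N)\geq t-1$) I get $\Ext^i_R(\Tr M, N)=0$ for $i=1,\ldots,t$, provided $t\geq 1$; the finite-length hypothesis of \ref{p1}(ii) is exactly what the previous step supplied.

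With $\Ext^i_R(\Tr M,N)=0$ for $i=1,\dots,t$ in hand, I distinguish two cases. If $t\geq c+1$, then Lemma~\ref{l3} applies directly to the pair $(\Tr M, N)$: it gives that $\Tr M$ is maximal Cohen-Macaulay and $\Ext^i_R(\Tr M, N)=0$ for all $i\geq1$. Since $\CI_R(N)<\infty$ and $\Tr M$ is then totally reflexive (being maximal Cohen-Macaulay over a Gorenstein-on-the-nose quotient of a Gorenstein ring, one needs $\G\text{-dim}$ finiteness, which holds as $R$ is a complete intersection), Lemma~\ref{le1}(ii) — applied with $M$ replaced by $\Tr M$, using $\Tr\Tr M\cong M$ up to projectives — converts this into $\Tor_i^R(M,N)=0$ for all $i\geq1$, which is the conclusion. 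If instead $t\leq c$, the output of \ref{p1}(ii) only gives vanishing up to degree $t\leq c$, not far enough to start Lemma~\ref{l3}; this is where hypothesis (iv) enters. I would use (iv), $\Tor_i^R(M,N)=0$ for $i=1,\dots,c-t+1$, to push up the range: via \ref{a1} again (now in the direction $\Tor\Rightarrow\Ext$, reading the exact sequence to get that the vanishing of these low $\Tor$'s forces vanishing of $\Ext^i_R(\Tr M, N)$ in a matching range), I would assemble enough consecutive vanishing $\Ext$'s — a window of length $c+1$ around degree $t$ — to apply \ref{th9} (or \ref{l3} after a pushforward/syzygy shift) to conclude $\G\text{-dim}_R(\Tr M)\leq 0$, and then finish as before through \ref{le1}(ii). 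The degenerate cases $t=0$ (then $M\otimes_RN$ has depth $0$, and (iv) gives $\Tor_i^R(M,N)=0$ for $i=1,\dots,c+1$, so one applies \ref{th9}/\ref{l3} with $v=0$ more or less immediately) and $M$ or $N$ free should be dispatched first.

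The main obstacle I anticipate is the bookkeeping in the low-depth case $t\leq c$: matching the degree ranges so that the $\Tor$-vanishing from (iv) and the $\Ext$-vanishing from \ref{p1}(ii) overlap into a single uninterrupted block of length $c+1$ that \ref{th9} can consume, while simultaneously controlling that the auxiliary $\Ext$'s against $N$ (and against the finite-length quotients $N_j$ one may need to take to reduce $\depth N$ to $c+1$ as in the proof of \ref{l3}) remain finite length so that \ref{p1} keeps applying. A secondary subtlety is verifying, at the localization step, that $M_\fp$ maximal Cohen-Macaulay really does yield vanishing of $\Ext^i_{R_\fp}(\Tr M_\fp, N_\fp)$ for all $i\geq1$ — this uses that over the \emph{regular} local ring $S_\fq$ (lying under $R_\fp$) finite complete intersection dimension plus maximal Cohen-Macaulayness forces finite projective dimension, hence freeness, of the relevant modules on the punctured spectrum; one must be careful that this is exactly the regime where $\dim(R)>\cod(R)$ (equivalently $t\leq \dim R$ is compatible with $R$ an isolated singularity) makes the punctured spectrum ``large enough'' for the finite-length $\Tor$ conclusion of \ref{len} to bite.
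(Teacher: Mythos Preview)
Your treatment of the case $t\geq c+1$ is correct and matches the paper exactly: upgrade the finite-length hypothesis to all $i\geq 1$, feed hypotheses (i)--(iii) into \ref{p1}(ii) to get $\Ext^i_R(\Tr M,N)=0$ for $i=1,\ldots,t$, then apply \ref{l3} to $(\Tr M,N)$ and finish with \ref{le1}(ii).

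The gap is in the case $t\leq c$. Your plan is to read \ref{a1} ``in the direction $\Tor\Rightarrow\Ext$'' and conclude from $\Tor_i^R(M,N)=0$ for $i=1,\ldots,c-t+1$ that $\Ext^i_R(\Tr M,N)=0$ in a matching range, thereby assembling a window of $c+1$ consecutive vanishing $\Ext$'s for \ref{th9}. But \ref{a1} only gives $\Ext^1_R(\Tr\Omega^nM,N)\hookrightarrow\Tor_n^R(M,N)$; the module appearing is $\Tr\Omega^nM$, not $\Tr M$, and without knowing $\Ext^j_R(M,R)=0$ for $j=1,\ldots,n$ there is no stable isomorphism $\Tr\Omega^nM\simeq\Omega^n\Tr M$ that would let you shift degrees. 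So the ``matching range'' of $\Ext^i_R(\Tr M,N)$ never materializes, and the bookkeeping obstacle you flag is not just bookkeeping --- it is a missing idea.

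The paper handles $t\leq c$ by an entirely different mechanism: induction on $n=c-t+1$. For the base case $n=1$ it invokes the Auslander--Bridger construction \cite[2.21]{AuBr}, an exact sequence $0\to F\to L\to M\to 0$ with $F$ free and $\Ext^1_R(L,R)=0$; this last condition is precisely what forces $\Tr L\simeq\Omega\Tr\Omega L$ stably, so that the single extra vanishing $\Ext^1_R(\Tr\Omega L,N)=0$ coming from $\Tor_1^R(M,N)=0$ via \ref{a1} can be spliced onto the block $\Ext^i_R(\Tr L,N)=0$ for $i=1,\ldots,t$ to produce $c+1$ consecutive zeros for \ref{th9}. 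For the inductive step $n\geq 2$ the paper does not stay over $R$ at all: it passes to $T=S/(x_1,\ldots,x_{c-1})$, uses the change-of-rings long exact sequence to transport the Tor-vanishing from (iv) to $\Tor_i^T(X,N)=0$ for $i=1,\ldots,n-1$ with $X=\Omega^T_1M$, and applies the induction hypothesis over $T$ (where $\cod$ has dropped by one). Neither the $\Ext^1(L,R)=0$ construction nor the codimension-reduction step appears in your proposal, and both are essential.
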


\begin{proof} We start by recalling that $\CI_R(N)<\infty$. It follows from \ref{le1} that $\length(\Tor_i^R(M,N))<\infty$ and $\length(\Ext^i_R(\Tr M,N))<\infty$, for all $i\geq 1$. Hence, by \ref{p1}(ii), we have: 
\begin{equation}\tag{\ref{l4}.1}
\Ext^i_R(\Tr M,N)=0 \text{ for all } i=1, \ldots, t.
\end{equation}

Suppose $t\geq c+1$. Then (\ref{l4}.1) and \ref{l3} imply $\Tr M$ is maximal Cohen-Macaulay and $\Ext^i_R(\Tr M,N)=0$ for all $i\geq 1$. Thus $M$ is totally reflexive, and the vanishing of $\Tor_i^R(M,N)$ for all $i\geq 1$ follows from \ref{le1}(ii). 

Next assume $t\leq c$. Then, by assumption, we have $\Tor_i^R(M,N)=0$ for all $i=1, \ldots, n$, where $n=c-t+1$. We will proceed by induction on $n$.

First suppose $n=1$. We have, by \cite[2.21]{AuBr}, a short exact sequence of modules in $\md R$ of the form
\begin{equation}\tag{\ref{l4}.2}
0\longrightarrow F\longrightarrow L\longrightarrow M\longrightarrow0,
\end{equation}
where $F$ is a free $R$-module and $\Ext^1_R(L,R)=0$. Since $\Tor_1^R(M,N)=0$, tensoring (\ref{l4}.2) with $N$, we obtain the exact sequence:
\begin{equation}\tag{\ref{l4}.3}
0\longrightarrow N\otimes_RF\longrightarrow N\otimes_RL\longrightarrow N\otimes_RM\longrightarrow0.
\end{equation}
It follows from (\ref{l4}.2) that $\Tor_1^R(L,N)=\Tor_1^R(M,N)=0$ and $\Tor_i^R(L,N)=\Tor_i^R(M,N)$ for all $i\geq 2$. In particular, by (\ref{a1}), we see: 
\begin{equation}\tag{\ref{l4}.4}
\Ext^1_R(\Tr\Omega L,N)=0.
\end{equation}
It follows from ({\ref{l4}.2}) that $L_\fp$ is totally reflexive for all $\fp\in\Supp_R(L\otimes_RN)-\{\fm\}$. Therefore, since $\length(\Tor_i^R(L,N))<\infty$ for all $i\geq 1$, we observe from \ref{le1}(ii) that $\length(\Ext^i_R(\Tr L,N))<\infty$ for all $i\geq 1$. It is easy to see $\Tr L$ is stably isomorphic to $\Omega\Tr\Omega L$ because $\Ext^1_R(L,R)=0$. Furthermore, by (\ref{l4}.3), we have $\depth_R(L\otimes_RN)\geq \depth_R(N)$.  Thus \ref{p1} gives:
\begin{equation}\tag{\ref{l4}.5}
\Ext^{i+1}_R(\Tr\Omega L, N) \cong \Ext^{i}_R(\Omega\Tr\Omega L, N) \cong \Ext^{i}_R(\Tr L, N)=0 \text{ for all } i=1, \ldots t.
\end{equation}
Consequently, by (\ref{l4}.4) and (\ref{l4}.5), we obtain:
\begin{equation}\tag{\ref{l4}.6}
\Ext^i_R(\Tr\Omega L,N)=0 \text{ for all } i=1, \ldots, t+1. 
\end{equation}
We are now in a position to conclude that $\Tor_i^R(N,M)=0$ for all $i\geq 1$ by using \ref{th9} and \ref{le1}(ii).

Now we consider the case where $n\geq 2$. Set $T=S/(x_1,\cdots,x_{c-1})$ so that $R\cong T/(x_{c})$.
Consider an exact sequence of modules in $\md T$ of the form
\begin{equation}\tag{\ref{l4}.7}
0\rightarrow X\rightarrow P\rightarrow M\rightarrow0,
\end{equation}
where $P$ is a free $T$-module. Tensoring (\ref{l4}.7) with $N$ over $T$, we have the following exact sequence
\begin{equation}\tag{\ref{l4}.8}
0\rightarrow\Tor_1^T(M,N)\rightarrow X\otimes_TN\rightarrow P\otimes_TN\rightarrow M\otimes_TN\rightarrow0.
\end{equation}

The change of rings spectral sequence \cite[11.64]{Rotman} yields a long exact sequence of the form
$$\begin{CD}
&&&&&&&&\\
     &&&& \vdots&&\vdots&&\vdots\\
  \ \  &&&& \Tor_1^R(M,N)@>>>\Tor_2^T(M,N)@>>>\Tor_2^R(M,N)@>>>&\\
  \ \  &&&& M\otimes_RN@>>>\Tor_1^T(M,N)@>>>\Tor_1^R(M,N)@>>>0.&\\
\end{CD}$$\\
Since $\Tor_i^R(M,N)=0$ for all $i=1, \ldots, n$, it follows $M\otimes_RN \cong \Tor_1^T(M,N)$ and that $\Tor_i^T(M,N)=0$ for all $i=2, \ldots, n$. Hence 
$\Tor_i^T(X,N)=0$ for all $i=1, \ldots, n-1$. Similarly, it is easy to check that the hypotheses (i), (ii) and (iii) hold for the pair $(X,N)$ over the ring $T$. Therefore, by the induction hypothesis, we have $\Tor_i^T(X,N)=0$ for all $i\geq 1$, i.e., $\Tor_i^T(M,N)=0$ for all $i\geq 2$. This implies $\Tor_i^R(M,N)\cong\Tor_{i+2}^R(M,N)$ for all $i\geq 1$. Since, by assumption, $\Tor_1^R(M,N)$ and $\Tor_2^R(M,N)$ vanish, the claim follows.
\end{proof}

We finish this section by recording a consequence of Theorem \ref{l4}; cf. Corollary \ref{th6}. In the following $\cx_R(M)$ denotes the \emph{complexity} of $M$; see \cite{Av1}.

\begin{chunk}\label{th10} Assume, in our setup, the ring $(S, \fn)$ is an unramified or equi-characteristic regular local ring, and the regular sequence $\textbf{x}=x_1, \ldots, x_c$ is contained in $\fn^2$. Assume further the following conditions hold:
\begin{enumerate}[\rm(a)]
\item $M\otimes_RN$ is maximal Cohen-Macaulay.
\item $\length(\Tor_i^R(M,N))<\infty$ for all $i\gg0$.
\item $\cx_R(M)\neq c$ and $\cx_R(N)\neq c$.
\item If $\dim(R)<\cod(R)$, assume further $\Tor_i^R(M,N)=0$ for all $i=1, \ldots, \cod(R)-\dim(R)$.
\end{enumerate}
Then the following conditions are equivalent:
\begin{enumerate}[(i)]
\item $M$ is maximal Cohen-Macaulay.
\item $N$ is maximal Cohen-Macaulay. 
\item $\Tor_i^R(M,N)=0$ for all $i\geq 1$.
\item $\Ext^i_R(M,N)=0$ for all $i\geq 1$.
\item $\Ext^i_R(N,M)=0$ for all $i\geq 1$.
\end{enumerate}
\end{chunk}

\begin{proof} Proceeding as in the proof of Corollary \ref{th6}, it suffices to assume (i) and prove (ii) and (iii) hold; see \cite[4.8, 6.1 and 6.3]{AvBu}. Hence we assume (i) and induct on $c$. If $c=1$, then $M$ is free so the result follows. Thus suppose $c\geq 2$. 

Notice, for an indeterminate $z$ over $S$, the regular local ring $S[z]_{\fn S[z]}$ is unramified or equi-characteristic. Hence extending the residue field by using the faithfully flat extension $S\hookrightarrow S[z]_{\fn S[z]}$, we may assume $R$ is complete and has infinite residue field.

It follows there exists a regular sequence $\textbf{f} = f_1, f_2, \cdots , f_c$ on $S$ such that $(\textbf{f})S=(\textbf{x})S \subseteq \fn^2$, $R = R_1/(f)$ and $\cx_{R_1}(M)<\cod(R_1)=c - 1$, where $R_1 = S/(f_2, \cdots, f_c)$ and $f = f_1$; see \cite[1.3]{Jor} and also \cite[2.5]{Ce}. Repeating this argument, we obtain a regular sequence $\textbf{y}= y_1, y_2, \ldots , y_c$ on $S$ such that $(\textbf{y})S=(\textbf{f})S$, $R = R_{c-1}/(y_1,\ldots,y_{c-1})$ and $\cx_{R_{c-1}}(M)<\cod(R_{c-1})=1$, where $R_{c-1} = S/(y_c)$. Since $\pd_{R_{c-1}}(M)<\infty$, we have that $M$ is Tor-rigid as an $R_{c-1}$-module; see \cite[1.9]{HW2} and \cite[Theorem 3]{Li}. Therefore, for each $\fp\in\Supp(M\otimes_RN)-\{\fm\}$, it follows that
$\depth_{R_\fp}(M_{\fp})+\depth_{R_\fp}(N_\fp) = \depth R_\fp +\depth_{R_\fp}(M\fp\otimes_{R_\fp}N_\fp)$ and $N_\fp$ is maximal Cohen-Macaulay. Consequently, by swaping the role of $N$ with that of $M$, we conclude from Theorem \ref{l4} that $\Tor^R_i(M,N)=0$ for all $i\geq 1$.
\end{proof} 

\section*{Acknowledgments}
We would like to thank Roger Wiegand for useful discussions related to this work.

%\bibliography{a}
\bibliographystyle{plain}

%\bibliographystyle{amsplain}
%%% ----------------------------------------------------------------------
%%% ----------------------------------------------------------------------
%%% ----------------------------------------------------------------------

\end{document}